\newcommand{\essup}[1]{{\rm ess}\,{{\displaystyle \sup_{\hspace*{-6mm}{#1}}}}\,}
\newcommand{\Int}[2]{{\displaystyle \int_{ #1}^{ #2}}}
\newtheorem{theorem}{Theorem}
\newtheorem{lemma}{Lemma}
\newtheorem{proposition}{Proposition}
\newtheorem{remark}{Remark}
\numberwithin{equation}{section}
\numberwithin{theorem}{section}
\numberwithin{lemma}{section}
\numberwithin{proposition}{section}
\numberwithin{corollary}{section}
\numberwithin{remark}{section}
\begin{document}
\title{Attainability of Time-Periodic flow of a Viscous Liquid Past an Oscillating Body}
\author{Giovanni P. Galdi\thanks{Partially supported by NSF grant DMS-1614011}\ \ \,and Toshiaki Hishida\thanks{
Partially supported by the Grant-in-Aid for Scientific Research 18K03363
from JSPS}}
%\date{}
\maketitle
\begin{abstract}
A body $\mathscr B$ is started from rest by  translational motion in an otherwise quiescent Navier-Stokes liquid filling the whole space. We show, for small data, that if after some time $\mathscr B$ reaches a spinless oscillatory motion of period $\cal T$, the liquid will eventually execute also a time periodic motion with same period $\cal T$. This problem is a suitable generalization of the famous Finn's starting problem for steady-states, to the case of time-periodic motions.
\end{abstract}

\section{Introduction
% and the main result
}
\label{intro}
Consider a rigid body, $\mathscr B$,  at rest and completely immersed in a quiescent Navier-Stokes liquid filling the whole three-dimensional space, $\Omega$, outside $\mathscr B$. Next, suppose  that, at time $t=0$ (say), $\mathscr B$ is smoothly set in translational motion (no spin) and that after the time  $t=1$ (say), its velocity $\eta=\eta(t)$ coincides with a periodic function, $\xi=\xi(t)$, of  period $\mathcal T$ whose average over  the time interval $[0,\mathcal T]$ vanishes. In the particular case where both $\eta$ and $\xi$ are parallel to a given direction, the above means that $\mathscr B$ is brought from rest to a  regime where it oscillates between two fixed configurations.  In the general case, $\mathscr B$ is taken from rest to a (spinless) motion where its center of mass  moves periodically
along a given closed curve.\par 
On physical grounds, it is expected that, under the given assumptions,  the liquid will eventually reach a time-periodic flow of period $\mathcal T$, at least if the magnitude of $\eta$ and (possibly) some of its derivatives is not ``too large." This specific circumstance is often referred to as {\em attainability property} of the flow. 
In this regard, it is worth mentioning a famous problem of attainability, the so called ``Finn's starting problem" \cite{Fi} where $\mathscr B$    accelerates (without spinning) from rest to a given constant translational velocity. In such a case, the terminal flow of the liquid is expected to be steady-state. Finn's problem  was eventually and
affirmatively solved by Galdi, Heywood and Shibata \cite{GaHS} and,  with more general assumptions, very recently by Hishida and Maremonti \cite{HiMa}.  
\par
In analogy with these results, the main objective of this paper will be to show that, under  the given hypothesis on the motion of $\mathscr B$,   the liquid indeed attains a corresponding time-periodic flow of period $\mathcal T$. 
\par
We shall next give a rigorous mathematical formulation of the problem.   
%Consider the Navier-Stokes flow past a rigid body in 3D, and
%suppose both the fluid and the body are initially at rest.
%The body then starts to translate until the terminal time-periodic 
%velocity $\xi(t)$ with the period ${\cal T}>0$,
%where the average of $\xi(t)$ over the period is assumed to be zero.
%By use of a function $h(t)$ describing a transition of the
%motion of the body, 
Let us begin to observe that
the translational velocity $\eta(t)$ can be written as
\[
\eta(t)=h(t)\xi(t),
\]
where we assume 
\begin{equation}
\begin{split}
&\xi(t+{\cal T})=\xi(t) \quad (t\in\mathbb R), \qquad
\int_0^{\cal T}\xi(t)\,dt=0, \\
&\xi\in W^{3,2}(0,{\cal T})=W^{3,2}(0,{\cal T}; \mathbb R^3),
\end{split}
\label{osc}
\end{equation}
and 
\begin{equation}
h\in C^1(\mathbb R; \mathbb [0,1]), \qquad
h(t)=0 \quad (t\leq 0), \qquad
h(t)=1 \quad (t\geq 1).
\label{transi}
\end{equation}
The governing equations of the liquid flow, driven by the translational velocity $\eta$   of the body, are thus given by
\begin{equation}
\begin{array}{ll}\smallskip\left.\begin{array}{ll}\smallskip
\partial_tu+u\cdot\nabla u=\Delta u+\eta(t)\cdot\nabla u-\nabla p_u, \\
\mbox{div $u$}=0\end{array}\right\}\ \ \mbox{in $\Omega\times (0,\infty)$}, \smallskip\\
u|_{\partial\Omega}=\eta(t),\smallskip  \\
u\to 0 \quad\mbox{as $|x|\to\infty$}, \\
u(\cdot,0)=0,
\end{array}
\label{NS}
\end{equation} 
where $u=u(x,t)$ and $p_u=p_u(x,t)$ are, respectively,
the velocity vector field and pressure field of the liquid,
and $\Omega$ (the exterior
of the body in $\mathbb R^3$) is assumed to have a sufficiently smooth boundary $\partial\Omega$.
Likewise, if the translational velocity of $\mathscr B$ is the time-periodic function $\xi$, 
it is reasonable to expect that, the corresponding velocity field of the liquid $v=v(x,t)$ is
time-periodic of period $\mathcal T$ ($\mathcal T$-periodic) as well, and obeys the following equations
\begin{equation}
\begin{array}{ll}\smallskip\left.\begin{array}{ll}\smallskip
\partial_tv+v\cdot\nabla v=\Delta v+\xi(t)\cdot\nabla v-\nabla p_v, \\
\mbox{div $v$}=0\end{array}\right\}\ \ \mbox{in $\Omega\times \mathbb R/{\cal T}\mathbb Z$},\smallskip \\
v|_{\partial\Omega}=\xi(t), \smallskip\\
v\to 0 \quad\mbox{as $|x|\to\infty$},
\end{array}
\label{periodic-NS}
\end{equation}
where $p_v$ denotes the pressure associated with $v$.
%If that is the case, then the flow $v$ is said to be attainable,
%as in the starting problem which had been raised by Finn \cite{Fi} and was
%affirmatively solved by Galdi, Heywood and Shibata \cite{GaHS} for the
%terminal translational velocity (nonzero uniform velocity) being small.

In \cite{Ga} the first author
showed  existence, uniqueness and regularity of a
$\mathcal T$-periodic solution $(v(t),p_v(t))$ to \eqref{periodic-NS}
for all ``small" $\xi(t)$ satisfying \eqref{osc}.
Furthermore, he provided a detailed analysis of the asymptotic representation
of $v(t)$ at spatial infinity, by showing that
the leading term of $v(t)$ is given by a distinctive {\em steady-state} velocity field $U(x)$ that decays at large spatial distances like $|x|^{-1}$. Therefore, in general, $v(t)\not\in L^2(\Omega)$, for all $t\in\mathbb R$.

%the Landau solution
%which is a homogeneous steady Navier-Stokes flow of degree $(-1)$
%and whose label is the net force exerted by the steady flow
%$\overline{v}:=\frac{1}{\cal T}\int_0^{\cal T}v(\cdot,t)\,dt$
%to the boundary $\partial\Omega$ (as in Korolev and \v Sver\'ak
%\cite{KSv} on the steady problem).
%Here, recall that the set consisting of all homogeneous Navier-Stokes
%flows of degree $(-1)$ is parametrized completely by a vectorial label
%about which the corresponding flow is axisymmetric,
%see \v Sver\'ak \cite{Sv}.
Let
\begin{equation}
u=hv+w,
\label{sum}
\end{equation}
where $h$ is the function given  in \eqref{transi}.
Then, from  \eqref{NS}, we deduce that the ``perturbation" $w(t)$ should obey,
together with the corresponding pressure $p_w=p_u-hp_v$, the following system of equations
\begin{equation}
\begin{array}{ll}\smallskip
\left.\begin{array}{ll}\smallskip
\partial_tw+w\cdot\nabla w+h(t)(v\cdot\nabla w+w\cdot\nabla v)\smallskip\\
\qquad\qquad\quad=\Delta w+\eta(t)\cdot\nabla w-\nabla p_w+f, \\
\mbox{div $w$}=0\end{array}\right\} \ \ \mbox{in $\Omega\times(0,\infty)$}, \\
w|_{\partial\Omega}=0, \smallskip\\
w\to 0 \quad\mbox{as $|x|\to\infty$}, \\
w(\cdot,0)=0,
\end{array}
\label{perturbed}
\end{equation}
with the forcing term ($h^\prime:=dh/dt$)
\begin{equation}
f:=-h^\prime v+(h-h^2)(v-\xi)\cdot\nabla v\,.
\label{force}
\end{equation}
The desired attainability property consists then in showing that the solution $w(t)$ to \eqref{perturbed} (exists, is unique and) tends to 0 as $t\to\infty$ in a suitable norm.  
In this respect, some comments are in order. Since $u(0)=0$, one would expect that the solution $u(t)$ to \eqref{NS} has 
finite energy, namely $u(t)\in L^2(\Omega)$ for all $t\ge 0$. Moreover, as noticed earlier on, $v(t)$ is, in general, {\em not} in $L^2$.
Consequently, in view of \eqref{sum}, $w(t)$ need not be in $L^2(\Omega)$, as also suggested by the fact that $f$ is {\em not} in $L^2(\Omega)$. This
 implies that   ``energy-based methods" might not be an appropriate tool to analyze the asymptotic behavior of $w(t)$, and one has thus to resort to the more general
$L^q$-theory.
This difficulty is analogous to that encountered in Finn's starting problem, which was in fact solved in \cite{GaHS} thanks to the asymptotic properties  of the Oseen
semigroup in $L^q$-spaces, proved for the first time in \cite{KS}.

However, in comparison with  \cite{GaHS}, our problem presents the following {\em two further} complications. (i)
The  velocity field $v(t)$, $t\in\mathbb R$, possesses weaker summability properties at large spatial distances than its steady-state counterpart considered in \cite{GaHS}.
This is due to the fact that  $\xi(t)$ has zero average, see \eqref{osc},
so that, unlike \cite{GaHS},  the motion of $\mathscr B$ produces no wake structure in the flow.
%In view of the asymptotic structure of $v(t)$ mentioned above,
%it decays at infinity with scale-critical rate $|x|^{-1}$.
(ii) The non-autonomous character
of the principal linear part, where the drift term
$\eta(t)\cdot\nabla w$ cannot be seen as a perturbation to the main (Stokes) operator, for all sufficiently large times. In order to overcome the difficulty in (i) we adapt to the case at hand  the duality method developed by Yamazaki \cite{Y} that allows
 us to handle the additional linear terms
$h(t)(v\cdot\nabla w+w\cdot\nabla v)$ in \eqref{perturbed}, in spite of the ``poor" summability of $v$ at large distances. As far as the other difficulty, we shall employ the theory recently developed in \cite{Hi18, Hi} by the second author, which provides $L^q$-$L^r$
decay estimates of the evolution operator,
 $\{T(t,s)\}_{t\geq s\geq 0}$,
generated by the {\em non-autonomous} Oseen operator
$-P[\Delta+\eta(t)\cdot\nabla]$ --with $P$  Helmholtz projection on the space of
$L^q$-vector fields-- entirely analogous to those available in the  autonomous case for Stokes and Oseen semigroups \cite{I}, \cite{KS}, \cite{MSo}.

By suitably combining the above arguments and using the results in \cite{Ga}, in the present paper we are able to show, in particular, the  decay to 0 of $w(t)$, as $t\to\infty$, in appropriate $L^q$-spaces; see Theorem \ref{main}.
Moreover, by {\color{black} developing} %using
an idea of Koba \cite{Ko}, we shall also show the decay of $w(t)$ in $L^\infty$-norm
(see \eqref{decay} below). However, our proof --based on the $L^\infty$-estimate of the composite 
operator
$T(t,s)P\mbox{div}$ given in Proposition \ref{composite}--
%which is a direct consequence of some results obtained in \cite{Hi};
%thus, our proof of \eqref{decay} with $q=\infty$ 
turns out to be  simpler and more direct than that given in \cite{Ko}.
%as well as straightforward.
%%we also develop the idea due to Koba \cite{Ko}, who first derived the rate 
%%$t^{-1/2+\varepsilon}$
%%of $L^\infty$-decay without assuming that the background flow 
%%(steady flow in \cite{Ko})
%%possesses better summability than scale-critical rate $|x|^{-1}$.
%A duality argument is adopted in \cite{Ko}, however, 
%in this paper we make it clear
%that the only thing we need is the $L^\infty$-estimate of the composite 
%operator
%$T(t,s)P\mbox{div}$, see Proposition \ref{composite},
%which is a direct consequence of some results obtained in \cite{Hi};
%thus, our proof of \eqref{decay} with $q=\infty$ is simpler 
%as well as straightforward.
\par
The plan of the paper is as follows. In the next section we shall state the main results, collected in Theorem \ref{main}. In Section 3
we present some results from \cite{Ga} and \cite{Hi18, Hi} and deduce some relevant consequences. 
The final Section 4 is devoted the proof of Theorem \ref{main}.
\medskip\par\noindent
{\bf Notation.} $C_{0,\sigma}^\infty(\Omega)$ is the subclass of vector functions $u$ in $C_0^\infty(\Omega)$ with $\mbox{div}\,u=0$. 
By $L^q ({\color{black} \Omega})$,  $1\leq q \leq \infty,$  
$W^{m,q}({\Omega}),$  $m \geq 0,$  $(W^{0,q}\equiv  L^q$), we denote usual Lebesgue and Sobolev classes of vector functions, with corresponding norms $\|.\|_{q}$ and $\|.\|_{m,q}$. Also,  $L^q_\sigma(\Omega)$ denotes the completion of $C_{0,\sigma}^\infty(\Omega)$ in  $L^q(\Omega)$, and $P: L^q\mapsto L^q_\sigma$ the associated Helmholtz projection {\color{black} (\cite{Ga-b}, \cite{M}, \cite{SS})}.
For $1 < p < \infty$
and $1 \le q \le\infty$, let $L^{p,q}(\Omega)$ denote the Lorentz space with norm $\|.\|_{p,q}$;
{\color{black} see \cite{BL} for details about this space. Since $P$ defines a bounded operator on $L^{p,q}(\Omega)$, we}
set $L^{p,q}_\sigma(\Omega)=P\,[L^{p,q}(\Omega)]$. Moreover, $D^{m,2}(\Omega)$ stands for the space of (equivalence classes of) functions 
${\color{black} u \in L^1_{loc}(\Omega)}$ such that
$ 
\sum_{|k|=m}\|D^k u\|_{2}<\infty\,.
$ Obviously, the latter defines a seminorm in $D^{m,2}$.
Let $B$ be a function space {\color{black} of spatial variable}
endowed with seminorm $\|\cdot\|_B$. For $r=[1,\infty]$, $\mathcal T>0$, $L^r(B)$ is the class of functions
$u:(0,\mathcal T)\rightarrow B$ such that 
$$
\|u\|_{L^r(B)}\equiv\left\{\begin{array}{ll}\smallskip\big( \Int{0}{\mathcal T}\|u(t)\|_B^r \big)^{\frac 1r}<\infty, \ \ \mbox{if 
$r\in [1,\infty)\,;$}\\   
\essup{t\in[0,\mathcal T]}\,\|u(t)\|_B <\infty, \ \ \mbox{if $r=\infty.$}
\end{array}\right.
$$
Likewise, we put
$$%\ba{ll}\medskip
W^{m,r}(B)=\Big\{u\in L^{r}(B): {\partial_t^ku\in L^{r}(B), \, k=1,\ldots,m}\Big\}\,.
%\ea
$$\par
\section{Statement of Main Results}
By use of the evolution operator $T(t,s)$ mentioned in the introductory section,
problem \eqref{perturbed} is transformed into the integral equation
\begin{equation}
w(t)=w_0(t)-\int_0^t
T(t,s)P\mbox{div $(Fw)$}(s)\,ds
\label{int-eq}
\end{equation}
with
\begin{equation}
w_0(t)=\int_0^t T(t,s)Pf(s)\,ds,
\label{top}
\end{equation}
\begin{equation}
Fw=F_vw=w\otimes w+h(w\otimes v+v\otimes w).
\label{nonlinear}
\end{equation}
The main result reads
\begin{theorem}
Suppose \eqref{osc} and \eqref{transi} hold and let $|h^\prime|_0:=\sup_{t\geq 0}|h^\prime(t)|$.
For every $\varepsilon\in (0,\frac{1}{4})$, there is a constant
$\delta=\delta(\varepsilon)$ such that if
\begin{equation}
\|\xi\|_{W^{3,2}(0,{\cal T})}
\leq\frac{\delta}{1+|h^\prime|_0}
\label{small}
\end{equation}
then problem \eqref{int-eq} admits a unique solution
$w\in C_{w^*}((0,\infty); L^{3,\infty}_\sigma(\Omega))$
with the following properties:

\begin{enumerate}
\item
The equation \eqref{int-eq} is satisfied in $L^{3,\infty}_\sigma(\Omega)$.

\item
The initial condition:
\begin{equation}
\lim_{t\to 0}\|w(t)\|_{3,\infty}=0.
\label{IC}
\end{equation}

\item
There is a constant $C>0$ such that
\begin{equation}
\|w(t)\|_{3,\infty}\leq C\|\xi\|_{W^{3,2}(0,{\cal T})}
\label{bdd}
\end{equation}
for all $t\geq 0$.

\item
Attainability:
\begin{equation}
\begin{split}
&\|w(t)\|_q=
\left\{
\begin{array}{ll}
O\left(t^{-1/2+3/2q}\right), \qquad
&q\in (3,q_0), \\
O\left(t^{-1/2+\varepsilon}\right),
&q\in (q_0,\infty],
\end{array}
\right.  \\
&\|w(t)\|_{q_0,\infty}=O(t^{-1/2+\varepsilon}),
\end{split}
\label{decay}
\end{equation}
as $t\to\infty$, where $q_0=3/2\varepsilon$.

\end{enumerate}
\label{main}
\end{theorem}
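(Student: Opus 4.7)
The plan is to recast \eqref{int-eq} as a fixed-point equation in the Banach space $X:=\{w\in C_{w^*}((0,\infty);L^{3,\infty}_\sigma(\Omega)):\|w\|_X:=\sup_{t>0}\|w(t)\|_{3,\infty}<\infty\}$, combining the non-autonomous $L^q$--$L^r$ evolution-operator estimates for $T(t,s)$ from \cite{Hi18, Hi} with the structural information on $v$ from \cite{Ga}, and then to bootstrap for the decay rates in \eqref{decay}. First I would bound the linear term $w_0(t)=\int_0^t T(t,s)Pf(s)\,ds$. Since $h'$ and $h-h^2$ vanish outside $[0,1]$, the forcing $f$ from \eqref{force} is supported in $s\in[0,1]$; on this interval \cite{Ga} supplies $\|v(s)\|_{3,\infty}+\|\nabla v(s)\|_q\le C\|\xi\|_{W^{3,2}}$ for suitable $q$, so applying the $L^q$--$L^{3,\infty}$ mapping bounds of $T(t,s)$ to the integrand on $[0,1]$ yields $\|w_0(t)\|_{3,\infty}\le C\|\xi\|_{W^{3,2}(0,\mathcal T)}$ uniformly in $t$, together with $\lim_{t\to 0}\|w_0(t)\|_{3,\infty}=0$. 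This will deliver both \eqref{bdd} and the initial condition \eqref{IC} once the full fixed point is constructed.

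For the nonlinear map $\Phi(w)(t):=w_0(t)-\int_0^t T(t,s)P\,\mathrm{div}(Fw)(s)\,ds$, the self-interaction term $w\otimes w$ is handled directly via the composite-operator bound for $T(t,s)P\,\mathrm{div}$ together with the Lorentz H\"older inequality $\|w\otimes w\|_{3/2,\infty}\le C\|w\|_{3,\infty}^2$, which gives an integrable-in-$s$ kernel. The essential difficulty lies in the mixed terms $h(v\otimes w+w\otimes v)$, because $v$ belongs only to $L^{3,\infty}$ (unavoidable, as recalled after \eqref{periodic-NS}: the zero-mean hypothesis \eqref{osc} forbids a wake and produces a leading steady term $U(x)\sim|x|^{-1}$), so the classical Kato scheme does not close. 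Following Yamazaki \cite{Y}, I would test $T(t,s)P\,\mathrm{div}(v\otimes w)$ against $\phi\in L^{3/2,1}_\sigma$, shift the divergence and the evolution operator onto the adjoint side, and exploit the gradient decay of $T(t,s)^*$ supplied by \cite{Hi18, Hi}; the Lorentz pairing $L^{3,\infty}\cdot L^{3,\infty}\subset L^{3/2,\infty}$ with predual $L^{3/2,1}$ then provides a time-integrable convolution. Under \eqref{small}, $\Phi$ becomes a contraction on a small ball of $X$, yielding the unique fixed point in $C_{w^*}((0,\infty);L^{3,\infty}_\sigma)$.

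With $w\in X$ in hand and $\|w\|_X$ small, I would bootstrap to obtain \eqref{decay}. For $q\in(3,q_0)$ the rate $t^{-1/2+3/(2q)}$ is precisely the $L^{r}$--$L^q$ decay of $T(t,s)$ applied to the compactly supported $Pf$, and the nonlinear contribution is absorbed by a standard Kato-type convolution estimate using the smallness of $\|w\|_X$ and the boundedness of $v$. For $q>q_0$ and for the Lorentz bound on $\|w(t)\|_{q_0,\infty}$, the direct $L^q$--$L^r$ estimate degenerates; here I would use Proposition \ref{composite} on $T(t,s)P\,\mathrm{div}$ combined with an interpolation introducing the $\varepsilon$-loss, following (and simplifying) Koba's approach \cite{Ko}. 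The main obstacle is precisely the nonlinear step of the contraction: the failure of $v\in L^3$ caused by \eqref{osc}, together with the non-autonomous nature of the drift $\eta(t)\cdot\nabla$, forces the proof out of the usual semigroup framework and into a Lorentz-duality argument for a time-dependent evolution system, whose feasibility hinges on the gradient estimates of $T(t,s)^*$ recently established in \cite{Hi18, Hi}.
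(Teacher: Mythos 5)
Your overall strategy coincides with the paper's: Yamazaki-type duality against $L^{3/2,1}_\sigma$ test functions using the adjoint gradient estimates of $T(t,s)^*$, the composite-operator bounds of Proposition \ref{composite} for the $L^\infty$ decay in the spirit of (and simplifying) Koba, and the support of $f$ in $[0,1]$ for the linear part. However, there is a genuine gap in your treatment of the self-interaction term. You claim that $w\otimes w$ can be handled \emph{directly} by the composite-operator bound with $\|w\otimes w\|_{3/2,\infty}\le C\|w\|_{3,\infty}^2$, yielding ``an integrable-in-$s$ kernel.'' It does not: with $q=3/2$ and $r=3$ the exponent in \eqref{compo2} is $(3/q-3/r)/2+1/2=1$, so the kernel is $(t-s)^{-1}$, which is not integrable near $s=t$ (and $q=3/2$ is in any case excluded from the hypotheses of Proposition \ref{composite}(2)). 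This borderline failure is exactly the criticality of the $L^{3,\infty}$ framework; it affects $w\otimes w$ and $v\otimes w$ identically, since $v$ and $w$ are both controlled only in $L^{3,\infty}$. The duality trick with the integrated adjoint estimate \eqref{adj-sharp}, $\int_0^t\|\nabla T(t,s)^*\varphi\|_{3,1}\,ds\le C\|\varphi\|_{3/2,1}$, must therefore be applied to the \emph{entire} quadratic term $Fw$, not only to the mixed part. As written, your contraction on the ball of $X$ does not close.

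A second, related weakness is the choice of iteration space. Working only with $\sup_{t>0}\|w(t)\|_{3,\infty}$ and deferring the decay rates to an unspecified ``bootstrap'' is risky: the nonlinear feedback in \eqref{int-eq} makes it difficult to upgrade mere boundedness in $L^{3,\infty}$ to the rates in \eqref{decay} a posteriori. The paper instead performs the fixed-point argument in $X_{q_0}$ with the norm $[w]_3+[w]_{q_0}$, where $[w]_q=\sup_{t>0}t^{1/2-3/2q}\|w(t)\|_{q,\infty}$, so that the rate $t^{-1/2+\varepsilon}$ for $q_0=3/2\varepsilon$ is built into the construction; the intermediate rates for $q\in(3,q_0)$ then follow by the interpolation inequality \eqref{interpo}, and only the $L^\infty$ decay requires the separate Koba-type argument. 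Incorporating the weighted norm from the outset (and noting that the $[\,\cdot\,]_{q_0}$ estimate requires splitting the time integral at $t/2$ and using \eqref{adj-lorentz} and \eqref{adj-sharp} on the two pieces) would repair this part of your argument. Finally, in the non-autonomous setting one only obtains weak* continuity of $\Psi w$ in $t>0$ (with strong convergence to $0$ as $t\to0$), a point your proposal does not address but which is needed to place the fixed point in $C_{w^*}((0,\infty);L^{3,\infty}_\sigma(\Omega))$.
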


\begin{remark}The unique existence of the evolution operator $T(t,s)$ or,
in other words, the well-posedness of the initial boundary value problem 
for the linearized system, 
was successfully proved by Hansel and Rhandi \cite{HR} even in the case when
the body $\mathscr B$ rotates.
The key point of their argument is how to overcome difficulties 
due to the rotational term;
in fact, the Tanabe--Sobolevskii theory   \cite{T} 
of parabolic evolution operators does not work in this situation.
\end{remark}\begin{remark}
We apply the theory in {\color{black} \cite{T}} to the non-autonomous Oseen operator 
without rotation. Thus, in such a case, 
the regularity properties of $T(t,s)$   basically coincide
with those of analytic semigroups
for the autonomous case.
%, which are  much better than those presented in  
%\cite{HR} and  \cite[Section 5]{Hi}.
As a consequence, one could show that the solution $w(t)$  in Theorem \ref{main} 
becomes  ``strong"
provided only $h^\prime(t)$, in addition to satisfying \eqref{transi}, is H\"older continuous.  
We will not give details of such a claim, 
since our main objective  is to show the attainability property.
\end{remark}
\begin{remark} We observe that our approach furnishes, in particular, also the {\em stability} of the time-periodic solution $v(t)$. In fact, this property can be established by studying an integral equation of the type \eqref{int-eq} obtained by  setting formally $h(t)\equiv 1$ (which implies that  
the term $f$ in \eqref{force} vanishes identically) and  replacing the function $w_0(t)$  with $\widetilde{w_0}(t)= T(t,0)w(0)$, where $w(0)$ is the initial perturbation. 
%Taking into account the classical properties of the analytic semigroup $T(t,0)$ in Lorentz spaces \cite{Y}, 
One can slightly modify the proof of Theorem \ref{main} to show that the  asymptotic decay property of $w(t)$ stated %in the first condition 
in {\color{black} \eqref{decay}} %$_1$ 
continues to hold, provided, in addition to \eqref{small}, that $w(0)\in L^{3,\infty}_\sigma(\Omega)$ with sufficiently small norm. 
\end{remark}

\section{Preparatory Results}
\label{preli}

Let us begin to recall the following result concerning the existence, uniqueness and asymptotic spatial behavior of solutions to \eqref{periodic-NS}. 
%
%In \cite{Ga} the first author of the present paper has proved the existence 
%of a unique time-periodic solution
%to \eqref{periodic-NS} and deduced the following spatial decay properties 
%\eqref{pointwise} of the solution at infinity.
%
\begin{proposition}
[\cite{Ga}]
Let $\xi$ satisfy \eqref{osc}.
Then, there exists a constant $\varepsilon_0>0$ such that if
\begin{equation}
{\tt D}:=\|\xi\|_{W^{3,2}(0,{\cal T})}<\varepsilon_0,
\label{small-0}
\end{equation}
problem \eqref{periodic-NS} has one and only one time-periodic solution
$(v,p_v)$ of period ${\cal T}$ in the class
\begin{equation*}
\begin{split}
&v\in W^{2,2}(D^{2,2})\cap W^{1,2}(D^{4,2})\cap W^{2,\infty}(W^{1,2})
\cap L^\infty(D^{3,2}),  \\
&p_v\in L^\infty(W^{1,2})\cap W^{1,2}(D^{3,2}),
\end{split}
\end{equation*}
with all  corresponding norms of $(v,p_v)$  bounded from above 
by ${\tt D}$.
Moreover, there exists a constant $C>0$ such that this solution obeys the following estimates
\begin{equation}
\begin{split}
(1+|x|)|v(x,t)|
&+(1+|x|^2)\{|\nabla v(x,t)|+|p_v(x,t)|\}  \\
&+(1+|x|^3)\{|\nabla^2v(x,t)|+|\nabla p_v(x,t)|\}
\leq C\,{\tt D},
\label{pointwise}
\end{split}
\end{equation}
for all $(x,t)\in \Omega\times\mathbb R/{\cal T}\mathbb Z$.
\label{periodic}
\end{proposition}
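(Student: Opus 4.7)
The plan is to reduce the result to a contraction mapping argument in a carefully chosen function space, preceded by a thorough analysis of the linearized problem, and then separately extract the pointwise spatial decay by exploiting the zero-mean property of $\xi$.

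First, I would study the linear problem: given a time-periodic forcing $g$ with small norm, find a $\mathcal T$-periodic pair $(v,p_v)$ solving the Oseen-type system $\partial_t v-\Delta v-\xi(t)\cdot\nabla v+\nabla p_v=g$, $\mbox{div}\,v=0$, with boundary datum $\xi(t)$ on $\partial\Omega$. Because $\xi$ is small, the drift $\xi(t)\cdot\nabla v$ can be regarded as a lower-order perturbation of the non-stationary Stokes operator in the exterior domain. Standard $L^2$-maximal regularity for the non-stationary Stokes system (available from the analytic semigroup generated by the Stokes operator), combined with the time-periodicity condition on the data, yields a unique time-periodic solution in $W^{1,2}(L^2_\sigma)\cap L^2(D^{2,2})$. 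Bootstrapping by differentiating the equations in $t$ and using that $\xi\in W^{3,2}(0,\mathcal T)$ allows one to upgrade to the regularity class stated in the proposition, with all norms bounded by the data norm $\mathtt D$.

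Second, the nonlinear solvability would follow by a Banach fixed-point argument on the map $\Phi(v)=L^{-1}(g-v\cdot\nabla v)$, where $L$ denotes the linear operator from the previous step and $g$ incorporates the lifting of the boundary datum $\xi$. One picks a small ball in the regularity class of the proposition; the quadratic term $v\cdot\nabla v$ is estimated via Sobolev embeddings and the linear a priori bounds, and the smallness threshold $\varepsilon_0$ is tuned so that $\Phi$ is a contraction. This simultaneously yields existence, uniqueness, and the bound $\mathcal O(\mathtt D)$ on all the listed norms.

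Third, to obtain the pointwise decay \eqref{pointwise} I would use a representation formula based on the Green's tensor $G(x,y)$ of the \emph{steady} Stokes problem in $\Omega$, which satisfies $|D^\alpha_xG(x,y)|\le C|x|^{-1-|\alpha|}$ for large $|x|$. Writing $v(x,t)$ as a Stokes volume potential against $(v\cdot\nabla v-\xi(t)\cdot\nabla v-\partial_t v)$ plus the boundary contribution driven by $\xi(t)$, the crucial observation is that the time-average $\bar v:=\frac{1}{\mathcal T}\int_0^{\mathcal T}v\,dt$ solves a \emph{steady} Stokes system (since $\int_0^{\mathcal T}\xi\,dt=0$), so its spatial decay is exactly $|x|^{-1}$ with the stated rates for derivatives. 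One then controls the fluctuation $v(\cdot,t)-\bar v$ using the higher time-regularity afforded by $\xi\in W^{3,2}$ (which transfers via the equation into $W^{2,\infty}(W^{1,2})\cap W^{1,2}(D^{4,2})$ bounds on $v$), combined with a Sobolev-type inequality in $t$, to show it decays at least as fast in $|x|$ as $\bar v$ and its derivatives.

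The main obstacle will be the third step: establishing that \emph{both} $v$ and all the derivatives occurring in \eqref{pointwise} carry the sharp spatial decay rate of the corresponding Stokes fundamental solution, \emph{uniformly} in $t\in\mathbb R/\mathcal T\mathbb Z$. The difficulty is that iterating the volume-potential representation produces convolution integrals whose convergence at infinity is delicate, and one must track the interplay between spatial weights, time averaging, and the nonlinear self-interaction $v\cdot\nabla v$. Once the $|x|^{-1}$ decay of $v$ is bootstrapped, however, the higher-derivative estimates follow by differentiating under the integral sign and applying Calderón--Zygmund-type bounds on the Stokes kernel.
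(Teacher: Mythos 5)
First, note that the paper does not prove Proposition \ref{periodic} at all: it is quoted from the preprint \cite{Ga}, so there is no internal proof to compare yours against. Your three-step outline (linear periodic problem, contraction mapping, separate representation-formula argument for the pointwise decay) is broadly the shape any proof of such a statement must take, so the remarks below concern whether your concrete choices would survive.

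The genuine gap is in your first two steps. You propose to solve the linear time-periodic problem by ``standard $L^2$-maximal regularity'' in $W^{1,2}(L^2_\sigma)\cap L^2(D^{2,2})$ and to run the fixed-point argument in that class. This cannot work as stated: in a three-dimensional exterior domain the zeroth Fourier mode in time (the average of the periodic solution) satisfies a \emph{steady} exterior Stokes problem, whose solution generically behaves like $|x|^{-1}$ at infinity and is therefore not square integrable. The present paper stresses exactly this point --- $v(t)\notin L^2(\Omega)$ for all $t$, the leading spatial term being a steady field $U(x)\sim |x|^{-1}$ --- and it is the reason the regularity class is phrased in the homogeneous spaces $D^{m,2}$. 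Concretely, even if the purely linear problem with boundary datum $\xi$ (zero average) had a solution with vanishing time average, the first nonlinear iterate already produces a forcing $\overline{\mbox{div}(v\otimes v)}$ with nonvanishing average, so your map $\Phi$ does not send an $L^2_\sigma$-based ball into itself. The standard remedy is to split $v$ into its time average and the oscillatory remainder: the nonzero modes can indeed be handled by $L^2$/maximal-regularity techniques (the operator $ik(2\pi/{\cal T})-A$ is invertible with good bounds for $k\neq 0$), whereas the averaged part must be treated by the exterior steady Stokes theory in homogeneous Sobolev or weak-$L^q$ spaces. Relatedly, your justification that $\bar v$ solves a steady Stokes system ``since $\int_0^{\cal T}\xi\,dt=0$'' is inaccurate: averaging kills $\partial_t v$, but $\xi(t)\cdot\nabla v(t)$ is a product of two time-dependent factors and its average does not vanish; the averaged equation is a steady Stokes system with forcing $\overline{(\xi-v)\cdot\nabla v}$, whose decay must itself be bootstrapped from that of $v$ --- precisely the circularity you flag in your third step but do not resolve.
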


\begin{remark}The constant $\delta$ in \eqref{small} of Theorem \ref{main}
must be taken smaller than $\varepsilon_0$ in \eqref{small-0}.\end{remark}

The next result regards the  large time behavior of the evolution operator
$T(t,s)$ and its  adjoint $T(t,s)^*$. These properties, among others, have been established in \cite{Hi18, Hi}. 
\begin{proposition}
[\cite{Hi18, Hi}]
Let $m\in (0,\infty)$ and assume
\begin{equation}
\sup_{t\geq 0}|\eta(t)|
+\sup_{t>s\geq 0}\frac{|\eta(t)-\eta(s)|}{t-s}
\leq m.
\label{rigid-bound}
\end{equation}

\begin{enumerate}
\item
Let $1<q<\infty$ and $q\leq r\leq\infty$.
Then, there is a constant $C=C(m,q,r,\Omega)>0$ such that
\begin{equation}
\|T(t,s)f\|_r\leq C(t-s)^{-(3/q-3/r)/2}\|f\|_q
\label{LqLr}
\end{equation}
for all $t>s\geq 0$, $f\in L^q_\sigma(\Omega)$ and that
\begin{equation}
\|T(t,s)f\|_{r,\infty}\leq C(t-s)^{-(3/q-3/r)/2}\|f\|_{q,\infty}
\label{evo-lorentz}
\end{equation}
for all $t>s\geq 0$ and $f\in L^{q,\infty}_\sigma(\Omega)$.

\item
Let $1<q\leq r\leq 3$.
Then there is a constant $C=C(m,q,r,\Omega)>0$ such that
\begin{equation}
\|\nabla T(t,s)^*g\|_r\leq C(t-s)^{-(3/q-3/r)/2-1/2}\|g\|_q
\label{adj-grad}
\end{equation}
for all $t>s\geq 0$, $g\in L^q_\sigma(\Omega)$ and that
\begin{equation}
\|\nabla T(t,s)^*g\|_{r,1}\leq C(t-s)^{-(3/q-3/r)/2-1/2}\|g\|_{q,1}
\label{adj-lorentz}
\end{equation}
for all $t>s\geq 0$ and $g\in L^{q,1}_\sigma(\Omega)$.
If in particular $1/q-1/r=1/3$ as well as 
$1<q {\color{black} <} r\leq 3$,
then there is a constant $C=C(m,q,\Omega)>0$ such that
\begin{equation}
\int_0^t\|\nabla T(t,s)^*g\|_{r,1}\,ds
\leq C\|g\|_{q,1}
\label{adj-sharp}
\end{equation}
for all $t>0$ and $g\in L^{q,1}_\sigma(\Omega)$

\end{enumerate}
\label{evo-op}
\end{proposition}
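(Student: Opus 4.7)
My plan would exploit the well-developed theory for the autonomous Oseen semigroup. For any frozen drift $\eta_0 \in \mathbb R^3$, the operator $L_{\eta_0} := -P[\Delta + \eta_0 \cdot \nabla]$ generates an analytic semigroup $e^{-\tau L_{\eta_0}}$ on $L^q_\sigma(\Omega)$ for $1 < q < \infty$, and Kobayashi--Shibata \cite{KS} established the full $L^q$--$L^r$ decay estimates along with gradient bounds, with constants uniform in $\eta_0$ on bounded sets. I would realize the non-autonomous $T(t,s)$ through a Tanabe--Sobolevskii parametrix \cite{T} based on the frozen-coefficient semigroups $e^{-(t-s)L(s)}$, where $L(s) := L_{\eta(s)}$. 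The Lipschitz condition on $\eta$ in \eqref{rigid-bound} controls the difference $L(t)-L(s)$, produces an integrable error kernel, and a Volterra iteration converges to yield $T(t,s)$ with the same pointwise $L^q$--$L^r$ estimate as the frozen-coefficient semigroups and constants depending only on $m, q, r, \Omega$. This gives \eqref{LqLr} directly.

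For the adjoint bounds \eqref{adj-grad}, I would first identify $T(t,s)^*$ with the evolution operator of the backward problem with drift $-\eta$. Formally $-\partial_s v - \Delta v - \eta(s)\cdot \nabla v + \nabla p = 0$ with final data at time $t$; the rigorous derivation in exterior domains with zero boundary data rests on solenoidal test functions and integration by parts in space-time. Since $-\eta$ satisfies the same hypotheses as $\eta$, the parametrix construction applies, and combining the resulting $L^q$--$L^r$ bound with the known Stokes/Oseen gradient smoothing $\|\nabla e^{-\tau L_{\eta_0}} f\|_r \le C\tau^{-1/2}\|f\|_r$ for $r \le 3$ yields \eqref{adj-grad}. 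The restriction $r \le 3$ is the familiar limitation of the gradient estimate in three-dimensional exterior domains.

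The Lorentz-space versions \eqref{evo-lorentz} and \eqref{adj-lorentz} I would obtain by real interpolation. Choosing two exponents straddling the desired $q$ and $r$ and applying $(\,\cdot\,,\,\cdot\,)_{\theta,\infty}$ or $(\,\cdot\,,\,\cdot\,)_{\theta,1}$ to the resulting pair of $L^q$--$L^r$ bounds produces a single bound on the appropriate Lorentz-solenoidal space while preserving the time decay rate; the boundedness of the Helmholtz projection on $L^{p,q}(\Omega)$ identifies $L^{q,r}_\sigma$ as the correct interpolation space.

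The sharp integrated bound \eqref{adj-sharp} would be the most delicate point, since on the critical scaling $1/q-1/r = 1/3$ the pointwise estimate \eqref{adj-lorentz} gives the non-integrable decay $(t-s)^{-1}$. Here I would adapt Yamazaki's real-interpolation trick \cite{Y}: establish refined endpoint bounds for $\nabla T(t,s)^*$ between two exponent pairs straddling the critical line, so that in one case the time integral converges at $s=t$ and in the other at $s=0$, and then use off-diagonal interpolation to produce the sharp $L^1$-in-time bound valued in $L^{r,1}_\sigma$, with the fine parameter of the interpolation accounting for the change from $(r,\infty)$ to $(r,1)$. I expect this step to be the main technical hurdle, as it requires verifying the interpolation hypotheses uniformly in $t$ and tracking constants carefully through the Volterra iteration.
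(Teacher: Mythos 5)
You should first note that the paper does not actually prove this proposition: estimates \eqref{LqLr} and \eqref{adj-grad}--\eqref{adj-sharp} are imported wholesale from \cite{Hi18, Hi}, and the only in-paper derivations (in the remark following the statement) concern \eqref{evo-lorentz} --- by interpolation for $r<\infty$ and by a semigroup-splitting argument for $r=\infty$ --- together with two warnings about where naive arguments fail. Measured against that, your proposal contains one serious overreach. The Tanabe--Sobolevskii parametrix built from frozen-coefficient Oseen semigroups does give existence of $T(t,s)$ and short-time smoothing, but it cannot deliver the \emph{global-in-time} decay rates $(t-s)^{-(3/q-3/r)/2}$ with constants depending only on $m,q,r,\Omega$. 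Already in the autonomous case these rates in an exterior domain are not a soft consequence of analyticity: they require local energy decay near the obstacle combined with cut-off comparison against the whole-space problem (Iwashita \cite{I}, Kobayashi--Shibata \cite{KS}, Maremonti--Solonnikov \cite{MSo}). For the non-autonomous operator the Duhamel/Volterra error terms are integrals over $(s,t)$ whose bounds do not stay uniform as $t-s\to\infty$; establishing \eqref{LqLr} uniformly in $t>s\ge 0$ is precisely the content of the paper \cite{Hi18}, and your sentence ``a Volterra iteration converges to yield $T(t,s)$ with the same pointwise $L^q$--$L^r$ estimate'' asserts the hard part without an argument.

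The second gap is the endpoint of \eqref{adj-lorentz}. Your plan is to interpolate \eqref{adj-grad} between exponent pairs straddling $(q,r)$, but \eqref{adj-grad} is only available for $r\le 3$, so at $r=3$ there is no admissible $r_1>3$ to straddle with; the paper's remark states explicitly that \eqref{adj-lorentz} with $r=3$ ``does not follow directly from \eqref{adj-grad} by interpolation,'' and this case is not disposable --- \eqref{adj-sharp} with $(q,r)=(3/2,3)$ is exactly what powers the uniqueness lemma and the $[\,\cdot\,]_3$ estimates in Section 4. In \cite{Hi} that endpoint Lorentz bound is proved directly rather than deduced from the $L^q$ version. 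On the other hand, your description of the passage from \eqref{adj-lorentz} to \eqref{adj-sharp} via off-diagonal real interpolation along the critical line $1/q-1/r=1/3$, with the fine index accounting for the $L^1$-in-time norm, is the correct mechanism and matches the paper's attribution of this step to Yamazaki \cite{Y}.
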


\begin{remark}
In \cite{Hi18, Hi} the assumption on $\eta$ is made in terms of the 
H\"older seminorm,
that is controlled by the left-hand side of \eqref{rigid-bound}, 
which is, in turn,  controlled by ${\tt D}$; see \eqref{small-0}.
Estimate \eqref{evo-lorentz} with $r<\infty$ immediately
follows from \eqref{LqLr} by interpolation.
The proof of $L^{q,\infty}$-$L^\infty$ estimate, that is, \eqref{evo-lorentz} 
with $r=\infty$, is not given in \cite{Hi18, Hi},
but it can be easily proved by use of the semigroup property,  following the lines of the proof
of \eqref{compo}--\eqref{compo2} below with $r=\infty$.
The remaining three bounds  \eqref{adj-grad}--\eqref{adj-sharp} are shown in \cite{Hi}. 
 However, we emphasize that \eqref{adj-lorentz} with  $r=3$
does not follow directly from \eqref{adj-grad} by interpolation.
The idea of deducing \eqref{adj-sharp} from \eqref{adj-lorentz}
is, in fact, due to Yamazaki \cite{Y}.\end{remark}

We next prove an important consequence of the previous proposition.
\begin{proposition}
Let $m\in (0,\infty)$ and assume \eqref{rigid-bound}. The following properties hold.

\begin{enumerate}
\item
Let $3/2\leq q<\infty$ and $q\leq r\leq\infty$.
Then there is a constant $C=C(m,q,r,\Omega)>0$ such that
the composite operator $T(t,s)P\mbox{\em div}$ extends to a bounded operator
from $L^q(\Omega)^{3\times 3}$ to $L^r_\sigma(\Omega)$, $r<\infty$,
and to $L^\infty(\Omega)^3$ subject to estimate
\begin{equation}
\|T(t,s)P\mbox{\em div $F$}\|_r
\leq C(t-s)^{-(3/q-3/r)/2-1/2}\|F\|_q
\label{compo}
\end{equation}
for all $t>s\geq 0$ and $F\in L^q(\Omega)^{3\times 3}$.

\item
Let $3/2<q<r\leq\infty$.
Then there is a constant $C=C(m,q,r,\Omega)>0$ such that
the composite operator $T(t,s)P\mbox{\em div}$ extends to a bounded operator
from $L^{q,\infty}(\Omega)^{3\times 3}$ to $L^r_\sigma(\Omega)$, $r<\infty$,
and to $L^\infty(\Omega)^3$ subject to estimate
\begin{equation}
\|T(t,s)P\mbox{\em div $F$}\|_r
\leq C(t-s)^{-(3/q-3/r)/2-1/2}\|F\|_{q,\infty}
\label{compo2}
\end{equation}
for all $t>s\geq 0$ and $F\in L^{q,\infty}(\Omega)^{3\times 3}$.

\end{enumerate}
\label{composite}
\end{proposition}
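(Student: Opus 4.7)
The plan is to prove \eqref{compo} by Banach-space duality, with a time-splitting at the endpoint $r=\infty$, and then to obtain \eqref{compo2} by real interpolation between two instances of \eqref{compo}. The starting point is the identity
\[
\langle T(t,s)P\,\mbox{div}\,F,g\rangle
=\langle\mbox{div}\,F,T(t,s)^*g\rangle
=-\langle F,\nabla T(t,s)^*g\rangle,
\]
valid for $F\in C_0^\infty(\Omega)^{3\times 3}$ and $g\in C_{0,\sigma}^\infty(\Omega)$: the first equality uses $Pg=g$ for solenoidal $g$ together with $P^*=P$, while the second is integration by parts, legitimate since $T(t,s)^*g$ satisfies the homogeneous Dirichlet condition and decays at infinity.

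For item 1 with $r<\infty$, H\"older's inequality and \eqref{adj-grad} applied with indices $(r',q')$ (the hypothesis $1<r'\le q'\le 3$ there translating into $3/2\le q\le r<\infty$ here) give, for $\|g\|_{r'}\le 1$,
\[
|\langle T(t,s)P\,\mbox{div}\,F,g\rangle|
\le\|F\|_q\,\|\nabla T(t,s)^*g\|_{q'}
\le C(t-s)^{-(3/q-3/r)/2-1/2}\|F\|_q.
\]
Taking the supremum over admissible $g$ gives \eqref{compo}, and a density argument extends the bound to all of $L^q(\Omega)^{3\times 3}$. At the endpoint $r=\infty$ this duality breaks because \eqref{adj-grad} excludes the conjugate index $r'=1$, so I write $T(t,s)=T(t,\tau)T(\tau,s)$ with $\tau=(t+s)/2$, pick any finite $r_1\in[q,\infty)$ (say $r_1=2q$), use the already established bound $\|T(\tau,s)P\,\mbox{div}\,F\|_{r_1}\le C(\tau-s)^{-(3/q-3/r_1)/2-1/2}\|F\|_q$, and compose with the $L^{r_1}$--$L^\infty$ decay \eqref{LqLr} for $T(t,\tau)$; the two temporal factors combine to yield the exponent $-3/(2q)-1/2$ required by \eqref{compo}.

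For item 2 I bypass \eqref{adj-lorentz} and \eqref{adj-sharp} and interpolate \eqref{compo} in the source index. Given $3/2<q<r\le\infty$, fix $q_0,q_1$ with $3/2\le q_0<q<q_1$, with $q_1\le r$ when $r<\infty$ and $q_1<\infty$ otherwise. By item 1, $T(t,s)P\,\mbox{div}:L^{q_j}\to L^r$ is bounded with norm $M_j=C(t-s)^{-(3/q_j-3/r)/2-1/2}$, $j=0,1$. The real interpolation identifications
\[
(L^{q_0},L^{q_1})_{\theta,\infty}=L^{q,\infty},
\qquad
(L^r,L^r)_{\theta,\infty}=L^r
\]
(with $\theta\in(0,1)$ chosen so that $1/q=(1-\theta)/q_0+\theta/q_1$, and the second identification up to equivalent norms) together with the Lions--Peetre theorem yield
\[
\|T(t,s)P\,\mbox{div}\,F\|_r\le CM_0^{1-\theta}M_1^\theta\|F\|_{q,\infty}
=C(t-s)^{-(3/q-3/r)/2-1/2}\|F\|_{q,\infty},
\]
which is \eqref{compo2}; the strict inequalities $3/2<q<r$ are exactly what is needed to bracket $q$ between admissible $q_0$ and $q_1$.

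The chief obstacle is the rigorous setup of the duality identity in the $L^q$ (not $L^2$) framework: one must check that $P$ is compatible with $T(t,s)^*$, that $T(t,s)^*$ preserves the solenoidal condition and the homogeneous Dirichlet data, and that the composite $T(t,s)P\,\mbox{div}$ admits a unique continuous extension from smooth tensors to $L^q$ (respectively $L^{q,\infty}$). These are standard properties of the Helmholtz decomposition in exterior domains and of the non-autonomous Oseen evolution system constructed in \cite{Hi18,Hi}; once taken for granted, the duality--splitting--interpolation scheme above is a routine execution.
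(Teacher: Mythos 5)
Your argument is correct, and item 1 (the duality estimate via \eqref{adj-grad} for $r<\infty$, then the split $T(t,s)=T(t,\tau)T(\tau,s)$ at $\tau=(t+s)/2$ combined with \eqref{LqLr} for $r=\infty$) is exactly the paper's proof. For item 2 you take a mildly different route: you interpolate \eqref{compo} in the source index only, keeping the strong target fixed and invoking $(L^{q_0},L^{q_1})_{\theta,\infty}=L^{q,\infty}$ together with $(L^r,L^r)_{\theta,\infty}=L^r$, which delivers $L^{q,\infty}\to L^r$ in one stroke and also covers $r=\infty$ without a second semigroup splitting. The paper instead first interpolates to the weak-type target estimate \eqref{compo3} ($L^{q,\infty}\to L^{r,\infty}$, valid even for $q=r$), then upgrades to a strong $L^r$ norm via the convexity inequality \eqref{interpo} by bracketing $r$ between two weak norms --- this is where the strict inequality $q<r$ enters for them --- and finally handles $r=\infty$ by another composition with \eqref{LqLr}. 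The two schemes are equivalent in outcome for the stated proposition; your version is slightly more economical, while the paper's intermediate step \eqref{compo3} additionally records the diagonal weak-to-weak bound. The only point to make explicit in your write-up is the standard consistency of the two continuous extensions of $T(t,s)P\,\mathrm{div}$ on $L^{q_0}+L^{q_1}$ (they agree on the dense common core $C_0^\infty$), which you already flag as a routine verification.
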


\begin{proof}
By density, it suffices to show \eqref{compo} for $F\in C_0^\infty(\Omega)^{3\times 3}$.
We first consider the case $3/2\leq q\leq r<\infty$, so that
$1<r^\prime\leq q^\prime\leq 3$.
By \eqref{adj-grad} we have
\begin{equation*}
\begin{split}
|\langle T(t,s)P\mbox{div $F$}, \varphi\rangle|
&=|\langle F, \nabla T(t,s)^*\varphi\rangle|  \\
&\leq \|F\|_q \|\nabla T(t,s)^*\varphi\|_{q^\prime}  \\
&\leq C(t-s)^{-(3/q-3/r)/2-1/2}\|F\|_q\|\varphi\|_{r^\prime}
\end{split}
\end{equation*}
for all $t>s\geq 0$ and $\varphi \in L^{r^\prime}_\sigma(\Omega)$,
which leads to \eqref{compo} with $r<\infty$.
This combined with \eqref{LqLr} ($r=\infty$) implies that
\begin{equation*}
\begin{split}
\|T(t,s)P\mbox{div}F\|_\infty
&\leq C(t-s)^{-3/4q}\|T((t+s)/2,s)P\mbox{div}F\|_{2q}  \\
&\leq C(t-s)^{-3/2q-1/2}\|F\|_q
\end{split}
\end{equation*}
yielding \eqref{compo} with $r=\infty$.

Let $3/2<q\leq r<\infty$, then \eqref{compo} implies
\begin{equation}
\|T(t,s)P\mbox{div $F$}\|_{r,\infty}
\leq C(t-s)^{-(3/q-3/r)/2-1/2}\|F\|_{q,\infty}
\label{compo3}
\end{equation}
for all $t>s\geq 0$ and $F\in L^{q,\infty}(\Omega)^{3\times 3}$.
Since
\begin{equation}
\|u\|_r\leq C\|u\|_{r_0,\infty}^{1-\theta}\|u\|_{r_1,\infty}^\theta
\label{interpo}
\end{equation}
where $1/r=(1-\theta)/r_0+\theta/r_1$ as well as $0<\theta<1$ and
$1<r_0<r<r_1\leq\infty$,
we obtain \eqref{compo2} from \eqref{compo3} as long as
$3/2<q<r<\infty$.
This combined with \eqref{LqLr} ($r=\infty$) leads to \eqref{compo2}
when $3/2<q<r=\infty$.
The proof is complete.
\end{proof}

\section{Proof of Theorem \ref{main}}
\label{pr}

Following Yamazaki \cite{Y}, we consider the following weak form of 
\eqref{int-eq}:
\begin{equation}
\langle w(t),\varphi\rangle
=\langle w_0(t),\varphi\rangle
+\int_0^t\langle (Fw)(s),
\nabla T(t,s)^*\varphi\rangle\,ds \qquad
\forall\varphi\in C_{0,\sigma}^\infty(\Omega).
\label{weak-int}
\end{equation}
For $q\in [3,\infty)$, let us introduce the space
\begin{equation*}
\begin{split}
X_q=\{w\in 
C_{w^*}((0,\infty);\,
L^{3,\infty}_\sigma(\Omega)\cap L^{q,\infty}_\sigma(\Omega));\,
&[w]_3+[w]_q<\infty, \\
&\lim_{t\to 0}\|w(t)\|_{3,\infty}=0
\},
\end{split}
\end{equation*}
where
\begin{equation}
[w]_q:=\sup_{t>0}t^{1/2-3/2q}\|w(t)\|_{q,\infty}.
\label{t-norm}
\end{equation}
Clearly, $X_q$ becomes a Banach space when endowed with norm
$[w]_3+[w]_q$.

Under the smallness condition \eqref{small-0}, the solution $v$ obtained 
in Proposition \ref{periodic}
and the force $f$ defined by \eqref{force} fulfill
\[
v(t),\,f(t)\in L^{3,\infty}(\Omega)\cap L^\infty(\Omega)
\]
with
\begin{equation}
\begin{split}
&\sup_{t\geq 0}\, (\|v(t)\|_{3,\infty}+\|v(t)\|_\infty)
\leq C{\tt D}, \\
&\sup_{t\geq 0}\, (\|f(t)\|_{3,\infty}+\|f(t)\|_\infty)
\leq C(|h^\prime|_0+{\tt D}){\tt D}, %\|\xi\|_{W^{3,2}(0,{\cal T})},
\end{split}
\label{est-periodic}
\end{equation}
which immediately follows from \eqref{pointwise}.
This, combined with \eqref{evo-lorentz}, implies the following lemma.
\begin{lemma}
Suppose \eqref{osc}, \eqref{transi} and \eqref{small-0}.
Then the function $w_0$ defined by \eqref{top}
belongs to $X_q$ for every $q\in [3,\infty)$.
Moreover, we have
$w_0(t)\in L^\infty(\Omega)$ for each $t>0$. Finally, 
for every $r\in [3,\infty]$, there is a constant $c_r>0$ such that
\begin{equation}
\|w_0(t)\|_{r,\infty}
\leq c_r(|h^\prime|_0+{\tt D}){\tt D}\,(1+t)^{-1/2+3/2r}
\label{est-top}
\end{equation}
for all $t>0$,
with ${\tt D}$ given in \eqref{small-0}.
\label{head}
\end{lemma}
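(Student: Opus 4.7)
The plan hinges on the observation that the forcing term $f$ has compact support in time: by \eqref{transi}, $h^\prime$ vanishes outside $[0,1]$, and $h-h^2=h(1-h)$ vanishes wherever $h\in\{0,1\}$, so \eqref{force} gives $f(s)\equiv 0$ for $s\geq 1$. Consequently $w_0(t)=\int_0^{\min(t,1)} T(t,s)Pf(s)\,ds$, and \eqref{est-periodic} combined with boundedness of $P$ on Lorentz spaces yields $\|Pf(s)\|_{3,\infty}\leq C(|h^\prime|_0+{\tt D}){\tt D}$ uniformly in $s\in[0,1]$.

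To estimate the $L^{r,\infty}$-norm of $w_0$ I apply \eqref{evo-lorentz} with $q=3$ to each integrand, obtaining
\[
\|T(t,s)Pf(s)\|_{r,\infty}\leq C(t-s)^{-(1-3/r)/2}\,(|h^\prime|_0+{\tt D}){\tt D}
\]
for every $r\in[3,\infty]$; here the endpoint case $r=\infty$ is covered by the remark following Proposition \ref{evo-op}. I then split into two regimes. For $t\leq 2$ the power is integrable on $[0,t]$, and direct integration gives $\|w_0(t)\|_{r,\infty}\leq C(|h^\prime|_0+{\tt D}){\tt D}\,t^{(1+3/r)/2}$; in particular for $r=3$ this reduces to an $O(t)$-estimate, which vanishes as $t\to 0$ and in passing establishes the initial condition $\lim_{t\to 0}\|w_0(t)\|_{3,\infty}=0$. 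For $t>2$ and $s\in[0,1]$ one has $t-s\geq t/2$, so the decay factor can be pulled out of the (now uniformly bounded) integral, yielding $\|w_0(t)\|_{r,\infty}\leq C(|h^\prime|_0+{\tt D}){\tt D}\,t^{-1/2+3/2r}$. Since $-1/2+3/2r\leq 0$ for $r\geq 3$, these two regimes combine to the single bound \eqref{est-top} after absorbing $t^{-1/2+3/2r}$ into a comparable multiple of $(1+t)^{-1/2+3/2r}$.

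The properties required for $X_q$-membership then follow immediately: $[w_0]_3+[w_0]_q<\infty$ because $t^{1/2-3/2r}(1+t)^{-1/2+3/2r}$ is uniformly bounded on $(0,\infty)$ for $r=3$ and for $r=q$, and the $L^\infty$-statement is the case $r=\infty$ of \eqref{est-top}. Weak-$*$ continuity of $t\mapsto w_0(t)$ into $L^{3,\infty}_\sigma\cap L^{q,\infty}_\sigma$ is routine once the pointwise bounds are in hand: for fixed test element $\varphi\in C_{0,\sigma}^\infty(\Omega)$ the continuity of $s\mapsto T(t,s)^*\varphi$ inherited from the evolution system, together with dominated convergence applied on the fixed interval $[0,1]$, gives continuity of $\langle w_0(\cdot),\varphi\rangle$, and density extends this to the full predual. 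The only genuinely delicate point is the handling of the endpoint exponent $r=\infty$ in the $L^{q,\infty}$-$L^\infty$ bound; this is precisely where the remark following Proposition \ref{evo-op} must be invoked, and it is also what keeps the argument self-contained without further interpolation.
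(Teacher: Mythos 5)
Your proposal is correct and follows essentially the same route as the paper: it rests on the observation that $f(s)=0$ for $s\geq 1$, the $L^{3,\infty}$--$L^{r,\infty}$ smoothing estimate \eqref{evo-lorentz} (with the $r=\infty$ endpoint supplied by the remark after Proposition \ref{evo-op}), a split between $t\leq 2$ and $t>2$, and a dominated-convergence argument for continuity in $t$. The only cosmetic difference is that the paper proves strong continuity of $w_0$ in $L^{q,\infty}$ (slightly more than the weak-$*$ continuity you verify), but this does not change the argument in any essential way.
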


\begin{proof}
Let $0\leq t<t+\tau$, then we have
\begin{equation*}
\begin{split}
&\quad w_0(t+\tau)-w_0(t) \\
&=\int_0^t\{T(t+\tau,s)-T(t,s)\}Pf(s)\,ds
+\int_t^{t+\tau}T(t+\tau,s)Pf(s)\,ds
=:I+II.
\end{split}
\end{equation*}
By \eqref{evo-lorentz} and \eqref{est-periodic}, we know that
\[
\|T(t,s)Pf(s)\|_{q,\infty}\leq C(|h^\prime|_0+{\tt D}){\tt D}=:C_0
\]
with some constant $C=C(q)>0$ independent of $(t,s)$
for every $q\in [3,\infty)$.
From the Lebesgue convergence theorem we infer
$I\to 0$ as $\tau\to 0$, whereas it follows at once 
$II\leq C_0\tau$.
For the other case $0<t/2<t+\tau<t$,
we have
\[
w_0(t+\tau)-w_0(t)
=\int_0^{t+\tau}\{T(t+\tau,s)-T(t,s)\}Pf(s)\,ds
-\int_{t+\tau}^t T(t,s)Pf(s)\,ds
\]
which goes to zero as $\tau\to 0$ by the same reasoning as above.
Consequently, $w_0(t)$ is even strongly continuous up to $t=0$ with values in 
$L^{q,\infty}(\Omega)$
as well as
$\|w_0(t)\|_{3,\infty}\to 0$ ($t\to 0$).
Concerning the estimate in $L^{r,\infty}(\Omega)$ with $r\in [3,\infty]$,
we consider only the one involving 
$\|w_0(t)\|_\infty$, 
since the other ones are obtained similarly.
Since $f(t)=0$ for $t\geq 1$, we use \eqref{evo-lorentz} to find
\[
\|w_0(t)\|_\infty\leq C\int_0^1(t-s)^{-1/2}\|Pf(s)\|_{3,\infty}\,ds
\leq Ct^{-1/2}(|h^\prime|_0+{\tt D}){\tt D}
\]
for $t\geq 2$, while we have
\[
\|w_0(t)\|_\infty\leq Ct^{1/2}(|h^\prime|_0+{\tt D}){\tt D}
\]
for $t<2$.
We thus obtain the desired estimate.
\end{proof}

Let us begin to prove the uniqueness property.
In fact, the solution obtained in Theorem \ref{main} is unique
in the sense of the following lemma, provided we choose the constant 
$\delta$ in \eqref{small}  smaller than the constant $\delta_0$ defined below.
\begin{lemma}
There is a constant $\delta_0>0$ such that if
${\tt D}\leq\delta_0$, then the solution to \eqref{weak-int} is unique in the ball 
$\{w\in X_3; [w]_3\leq\delta_0\}$.
\label{unique}
\end{lemma}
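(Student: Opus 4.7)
The plan is to pass to the difference $W := w_1 - w_2$ of two hypothetical solutions in the ball, exploit the bilinear structure of the nonlinearity $F$, and then bound $[W]_3$ by a small multiple of itself through a duality argument in the Lorentz pair $(L^{3,\infty}_\sigma, L^{3/2,1}_\sigma)$. Concretely, because
\begin{equation*}
(Fw_1)(s)-(Fw_2)(s) = W\otimes w_1 + w_2\otimes W + h(s)\bigl[W\otimes v + v\otimes W\bigr](s),
\end{equation*}
subtracting the integral identities \eqref{weak-int} for $w_1$ and $w_2$ gives
\begin{equation*}
\langle W(t),\varphi\rangle = \int_0^t \bigl\langle (Fw_1-Fw_2)(s),\, \nabla T(t,s)^*\varphi\bigr\rangle\,ds,\qquad \varphi\in C_{0,\sigma}^\infty(\Omega).
\end{equation*}

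The key step is to estimate the right-hand side by Hölder in Lorentz spaces, coupling $L^{3/2,\infty}$ on the nonlinear factor with $L^{3,1}$ on $\nabla T(t,s)^*\varphi$, and then invoking the sharp time-integrated bound \eqref{adj-sharp} at the critical pair $(q,r)=(3/2,3)$ (which satisfies $1/q-1/r = 1/3$). First, since $L^{3,\infty}\cdot L^{3,\infty}\hookrightarrow L^{3/2,\infty}$, recalling $[w_i]_3\leq \delta_0$ and $\|v(s)\|_{3,\infty}\leq C{\tt D}$ from \eqref{est-periodic},
\begin{equation*}
\|(Fw_1-Fw_2)(s)\|_{3/2,\infty}\leq C\bigl(\|w_1(s)\|_{3,\infty}+\|w_2(s)\|_{3,\infty}+\|v(s)\|_{3,\infty}\bigr)\|W(s)\|_{3,\infty}\leq C(\delta_0+{\tt D})[W]_3.
\end{equation*}
Combining the Hölder step inside the integral with \eqref{adj-sharp}, we then get
\begin{equation*}
|\langle W(t),\varphi\rangle|\leq C(\delta_0+{\tt D})[W]_3\int_0^t\|\nabla T(t,s)^*\varphi\|_{3,1}\,ds\leq C'(\delta_0+{\tt D})[W]_3\,\|\varphi\|_{3/2,1}.
\end{equation*}

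Since $L^{3/2,1}_\sigma$ is the predual of $L^{3,\infty}_\sigma$, density of $C_{0,\sigma}^\infty(\Omega)$ in $L^{3/2,1}_\sigma$ and the supremum over $t>0$ yield
\begin{equation*}
[W]_3\leq C''(\delta_0+{\tt D})[W]_3.
\end{equation*}
Because the lemma assumes ${\tt D}\leq\delta_0$, picking $\delta_0$ so small that $2C''\delta_0<1$ forces $[W]_3=0$, i.e.\ $w_1\equiv w_2$. The principal obstacle, which is exactly what Yamazaki's method resolves, is that the pointwise-in-$s$ estimate \eqref{adj-grad} of $\nabla T(t,s)^*\varphi$ at $(q,r)=(3/2,3)$ is singular of order $(t-s)^{-1}$ and therefore not integrable up to $s=t$; only the time-integrated bound \eqref{adj-sharp} provides the clean, $t$-uniform absorption needed to close the argument without any residual decay factor.
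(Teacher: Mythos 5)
Your proof is correct and follows essentially the same route as the paper's: subtract the two weak formulations, use the weak H\"older inequality $L^{3,\infty}\cdot L^{3,\infty}\hookrightarrow L^{3/2,\infty}$ paired against $\|\nabla T(t,s)^*\varphi\|_{3,1}$, invoke the time-integrated bound \eqref{adj-sharp} at $(q,r)=(3/2,3)$ together with \eqref{est-periodic}, and absorb via smallness of $\delta_0$ and ${\tt D}$. Your closing remark on why the pointwise estimate \eqref{adj-grad} would fail (non-integrable $(t-s)^{-1}$ singularity) correctly identifies the role of Yamazaki's duality trick, exactly as the paper intends.
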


\begin{proof}
Let both $w,\,\widetilde w\in X_3$ satisfy \eqref{weak-int}.
By duality $L^{3,1}_\sigma(\Omega)^*=L^{3/2,\infty}_\sigma(\Omega)$ 
together with the weak-H\"older inequality,
we have
\[
|\langle w(t)-\widetilde w(t), \varphi\rangle|
\leq C([w]_3+[\widetilde w]_3+[v]_3)
[w-\widetilde w]_3
\int_0^t\|\nabla T(t,s)^*\varphi\|_{3,1}\,ds
\]
for all $\varphi\in C^\infty_{0,\sigma}(\Omega)$.
We employ \eqref{adj-sharp} and \eqref{est-periodic} to obtain
\[
[w-\widetilde w]_3
\leq c_*([w]_3+[\widetilde w]_3+{\tt D})[w-\widetilde w]_3
\]
by duality,
which yields the assertion by taking
$\delta_0=1/4c_*$.
\end{proof}

Given $\varepsilon\in (0,\frac{1}{4})$, we set
$q_0=3/2\varepsilon\in (6,\infty)$ and intend to find a solution $w\in X_{q_0}$
to \eqref{weak-int} provided ${\tt D}$ is small enough.
Given $w\in X_{q_0}$ and $t>0$, we define $(\Psi w)(t)$ by
\[
\langle(\Psi w)(t),\varphi\rangle
=\int_0^t\langle (Fw)(s), \nabla T(t,s)^*\varphi\rangle\,ds \qquad
\forall\varphi\in C_{0,\sigma}^\infty(\Omega).
\]
We then find
\begin{equation}
\begin{split}
&[\Psi w]_3\leq C([w]_3+[v]_3)[w]_3, \\
&[\Psi w]_{q_0}\leq C([w]_3+[v]_3)[w]_{q_0}.
\end{split}
\label{into}
\end{equation}
The former is deduced along the same lines as in Lemma \ref{unique}, 
while the latter is verified by splitting the integral as
\begin{equation}
\left(\int_0^{t/2}+\int_{t/2}^t\right)
s^{-1/2+3/2q_0}\|\nabla T(t,s)^*\varphi\|_{r,1}\,ds
=:I+II
\label{split}
\end{equation}
where $r\in (3/2,2)$ is determined by the condition $1/r=2/3-1/q_0$.
In fact, in view of \eqref{adj-lorentz}, we get
\begin{equation*}
I\leq C\int_0^{t/2}s^{-1/2+3/2q_0}(t-s)^{-1}\,ds \,\|\varphi\|_{q_0^\prime,1}
\end{equation*}
that leads to the desired estimate, where $1/q_0^\prime+1/q_0=1$. Also,
 employing \eqref{adj-sharp}, we show
\[
II\leq Ct^{-1/2+3/2q_0}\int_{t/2}^t\|\nabla T(t,s)^*\varphi\|_{r,1}\,ds.
\]
By the same token we can show 
\begin{equation}
\begin{split}
[\Psi w-\Psi\widetilde w]_r
\leq C([w]_3+[\widetilde w]_3+[v]_3)
[w-\widetilde w]_r \qquad r\in \{3,q_0\},
\end{split}
\label{contra}
\end{equation}
for all $w,\, \widetilde w\in X_{q_0}$.

The above computations  are exactly the same as in \cite[Section 8]{HiS}.
However, because in our case the equation is non-autonomous,
the argument to show the continuity with respect to time is
different from the one adopted by Yamazaki \cite[Section 3]{Y} in which
the strong continuity is deduced for $t>0$.
Here, we show merely the weak* continuity for $t>0$, while we still have
strong convergence to 0 at the initial time, namely,
\[
\|(\Psi w)(t)\|_{3,\infty}
\leq C([w]_3+[v]_3)\sup_{0<s<t}\|w(s)\|_{3,\infty}\to 0
\]
as $t\to 0$ (as well as the same property for $w_0(t)$; see Lemma \ref{head}).
Actually, for $0<t<t+\tau$ and $\varphi\in C^\infty_{0,\sigma}(\Omega)$,
let us consider
\begin{equation}
\begin{split}
\langle (\Psi w)(t+\tau)-(\Psi w)(t),\varphi\rangle
&=\int_0^t
\langle (Fw)(s), \nabla\{T(t+\tau,s)^*-T(t,s)^*\}\varphi\rangle\,ds \\
&\quad +\int_t^{t+\tau}
\langle (Fw)(s), \nabla T(t+\tau,s)^*\varphi\rangle\,ds  \\
&=:I+II.
\end{split}
\label{weak-conti}
\end{equation}
Let $r\in (3/2,2)$ be the same exponent as in \eqref{split}.
By using the backward semigroup property we have
\begin{equation*}
\begin{split}
I&\leq C([w]_3+[v]_3)[w]_{q_0}\int_0^t
s^{-1/2+3/2q_0}\|\nabla T(t,s)^*\{T(t+\tau,t)^*\varphi-\varphi\}\|_{r,1}\,ds \\
&\leq C([w]_3+[v]_3)[w]_{q_0}\|T(t+\tau,t)^*\varphi-\varphi\|_{3/2,1}
\end{split}
\end{equation*}
which goes to zero as $\tau\to 0$ for all 
$\varphi\in C^\infty_{0,\sigma}(\Omega)$.
Concerning the other part, we have
\begin{equation*}
\begin{split}
II&\leq C([w]_3+[v]_3)[w]_{q_0}\int_t^{t+\tau}
s^{-1/2+3/2q_0}\|\nabla T(t+\tau,s)^*\varphi\|_{r,1}\,ds \\
&\leq 
C([w]_3+[v]_3)[w]_{q_0}t^{-1/2+3/2q_0}\tau^{1/2-3/2q_0}\|\varphi\|_{3/2,1}
\end{split}
\end{equation*}
for all $\varphi\in C^\infty_{0,\sigma}(\Omega)$,
which implies the strong convergence  with values in 
$L^{3,\infty}_\sigma(\Omega)$ also of this part.
Summing up, by density argument, we can state that the left-hand side of \eqref{weak-conti} goes to zero as $\tau\to 0$ for all
$\varphi\in L^{3/2,1}_\sigma(\Omega)$ and 
{\color{black} also for all $\varphi\in L^{q_0^\prime,1}_\sigma(\Omega)$ in view of \eqref{into}.}
The case $0<t/2<t+\tau<t$ is similarly discussed with
\begin{equation*}
\begin{split}
&\quad \langle (\Psi w)(t+\tau)-(\Psi w)(t), \varphi\rangle  \\
&=\int_0^{t+\tau}
\langle (Fw)(s), \nabla\{T(t+\tau,s)^*-T(t,s)^*\}\varphi\rangle\,ds  \\
&\quad -\int_{t+\tau}^t \langle (Fw)(s), \nabla T(t,s)^*\varphi\rangle\,ds
\end{split}
\end{equation*}
to conclude that 
$\Psi w$ is weak* continuous with values in $L^{3,\infty}_\sigma(\Omega)$
and in $L^{q_0,\infty}_\sigma(\Omega)$.

By these results, we can then  conclude that $w_0+\Psi w\in X_{q_0}$, for every $w\in X_{q_0}$.
Assume now ${\tt D}\leq 1$.
By taking into account 
\eqref{est-periodic}, \eqref{est-top}, \eqref{into} and \eqref{contra}, one can easily show the existence of a fixed point $w\in X_{q_0}$ of the map
\[
w\mapsto w_0+\Psi w
\]
in a closed ball of $X_{q_0}$ with radius
$2(c_3+c_{q_0})(|h^\prime|_0+1){\tt D}$, 
provided  $(|h^\prime|_0+1){\tt D}$ is small enough, where the smallness
depends on $\varepsilon$ (recall that $q_0=3/2\varepsilon>6$).
By Lemma \ref{unique} it is the only solution to \eqref{weak-int}
in the small within $X_3$.
From the interpolation inequality \eqref{interpo}, 
the solution $w(t)$ satisfies \eqref{decay} for $q\in (3,q_0)$.

For the solution $w(t)$ constructed above, 
it follows from \eqref{compo2} and \eqref{est-periodic}
that the second term on the right-hand side of \eqref{int-eq}
is Bochner integrable with values in $L^3(\Omega)$; in fact,
\[
\int_0^t\|T(t,s)P\mbox{div $(Fw)$}(s)\|_3\,ds
\leq C([w]_3+[v]_3)[w]_{q_0}
\]
for all $t>0$.
The latter, in conjunction with  Lemma \ref{head}, shows that
the weak form \eqref{weak-int} leads, in fact,  to the the conclusion that  the integral equation \eqref{int-eq}  is meaningful  
in $L^{3,\infty}_\sigma(\Omega)$.
Actually, from the computations that we shall perform in the next paragraph, it turns out 
that the second term of the right-hand side of \eqref{int-eq} is also 
Bochner integrable in $L^\infty(\Omega)$.

It remains to show \eqref{decay} for the other case 
$q\in (q_0,\infty]$, $q_0=3/2\varepsilon$.
To this end, on the account of the interpolation inequality  \eqref{interpo}, it is enough to prove the decay of $w(t)$ in the $L^\infty$-norm.
The argument that follows is essentially due to Koba \cite{Ko},
but, unlike \cite{Ko}, we shall not use a duality procedure; rather, we will directly apply the $L^{q,\infty}$-$L^\infty$ 
estimate of the composite operator
$T(t,s)P\mbox{div}$ proved in Proposition \ref{composite}. As a consequence,  the proof is considerably shortened and more direct.
By means  of \eqref{compo2}, it is easily seen that
\[
\int_0^t\|T(t,s)P\mbox{div $(w\otimes w)$}(s)\|_\infty\,ds 
\leq C[w]_{q_0}^2 t^{-1/2}
\]
for all $t>0$, where the summability of the integral is ensured since $q_0>6$.
Let $t>2$.
We split the other part of the integral of \eqref{int-eq} into two parts
\[
\left(\int_0^{t-1}+\int_{t-1}^t\right)
\|T(t,s)P\mbox{div $[h(w\otimes v+v\otimes w)]$}(s)\|_\infty\,ds
=:I+II.
\]
We utilize \eqref{compo2} again to find that
\[
I\leq \int_0^{t-1}
(t-s)^{-1-\varepsilon}\|v(s)\|_{3,\infty}\|w(s)\|_{q_0,\infty}\,ds
=\int_0^{t/2}+\int_{t/2}^{t-1}
=:I_1+I_2
\]
with
\[
I_1\leq C{\tt D}[w]_{q_0}t^{-1/2}, \qquad
I_2\leq C{\tt D}[w]_{q_0}t^{-1/2+\varepsilon},
\]
and that
\[
II\leq \int_{t-1}^t
(t-s)^{-(3/r+3/q_0)/2-1/2}\|v({\color{black} s})\|_{r,\infty}\|w(s)\|_{q_0,\infty}\,ds
\leq C{\tt D}[w]_{q_0}t^{-1/2+\varepsilon}
\]
where $r\in (3,\infty)$ is chosen in such a way that
$1/r+1/q_0<1/3$, see \eqref{est-periodic}.
The proof is complete.

\end{document}